\newcommand\NN{\mathbb{N}} 
\newcommand\RR{\mathbb{R}} 
\newcommand\CC{\mathbb{C}} 
\newcommand\PP{\mathbb{P}} 
\newcommand\EE{\mathbb{E}} 
\newtheorem{theorem}{Theorem}[section]%
\newtheorem{corollary}[theorem]{Corollary}%
\newtheorem{lemma}[theorem]{Lemma}%
\author{Raphaël Butez\footnote{ butez@ceremade.dauphine.fr, CEREMADE, UMR CNRS 7534 Universit\'{e} Paris-Dauphine, PSL Research university, Place du Mar\'{e}chal de Lattre de Tassigny 75016 Paris, FRANCE.}}
\title{The largest root of random Kac polynomials is heavy tailed}
\begin{document} 

\maketitle 

\begin{abstract}
We prove that the largest and smallest root in modulus of random Kac polynomials have a non-universal behavior. They do not converge towards the edge of the support of the limiting distribution of the zeros. This non-universality is surprising as the large deviation principle for the empirical measure is universal. This is in sharp contrast with random matrix theory where the large deviation principle is non-universal but the fluctuations of the largest eigenvalue are universal. We show that the modulus of the largest zero is heavy tailed, with a number of finite moments bounded from above by the behavior at the origin of the distribution of the coefficients. We also prove that the random process of the roots of modulus smaller than one converges towards a limit point process. Finally, in the case of complex Gaussian coefficients, we use the work of Peres and Vir\'ag \cite{peresvirag} to obtain explicit formulas for the limiting objects.
\end{abstract}

\section{Introduction}
Consider a random polynomial of the form:
\[ P_n(z)= \sum_{k=0}^n a_k z^k = a_n \prod_{k=1}^{n}(z-z_k^{(n)}) \]
where $a_0, \dots, a_n$ are i.i.d. random variables and $z_1^{(n)}, \dots, z_n^{(n)}$ are the complex zeros of $P_n$. These polynomials are often called Kac polynomials. The zeros of these polynomials are known to concentrate on the unit circle of $\CC$ as their degree tends to infinity under some moment condition on the coefficients. This universal behavior has been studied by many authors since the work of Sparo and Shur \cite{sparosur} and we refer to the book \cite{bharucha} for more precise information on the history of the topic. The most precise result about this convergence was given by Ibragimov and Zaporozhets in \cite{ibragimovzaporozhets} where they prove that for any bounded and continuous function $f$ and any $\varepsilon >0$
\[ \PP\left( \left| \frac{1}{n}\sum_{k=1}^n f(z_k^{(n)}) - \frac{1}{2\pi}\int_0^{2\pi} f(e^{i\theta})d\theta \right| > \varepsilon \right)\xrightarrow[n \to \infty]{} 0\]
if and only if $\EE(\log(1+|a_0|))<\infty$ and $\PP(a_0=0)<1$.

This result means that a proportion going to one of the zeros cluster uniformly on the unit circle. It does not prevent a negligible part of the zeros to be real or to be away from the unit circle. In this situation, it is natural to ask if $\max_k|z_k^{(n)}|$ and $\min_k |z_k^{(n)}|$ converge towards $1$ as $n$ goes to infinity. In this note we prove that the behavior of the extremal zeros of $P_n$ is not universal and that the random variable $\max |z_k^{(n)}|$ is usually a heavy tailed random variable. We give an upper bound on the number of finite moments depending on the cumulative distribution function of the coefficients at $0$.

To study the zeros of $P_n$ for large $n$, we may want to see $P_n$ as the partial sum of a random entire series.
If we assume that the random variable $|a_0|$ is non-deterministic and satisfies $\EE(\log(1+|a_0|))< \infty$, then the entire series $P_{\infty}(z)=\sum_{k=0}^{\infty}a_k z^k$ has almost surely a radius of convergence equal to $1$ and $P_{\infty}$ is a random non-constant holomorphic function on the unit disk. Hence $P_{\infty}$ has a countable set of zeros $\{z_k^{(\infty)}\}$ which are counted with multiplicity. As $P_{\infty}$ is almost surely a non-constant analytic function, its zeros are isolated and have no accumulation point inside the open unit disk. This ensures that there is a finite number of zeros inside any compact set in the disk.

\begin{theorem}[Main result]\label{heavytail} Assume that the random variable $a_0$ satisfies $\EE(\log(1+|a_0|))< \infty$, that $|a_0|$ is not deterministic and that $\PP(a_0=0)=0$. Let $n \in \NN \cup \{\infty \}$ and
\[ x_1^{(n)}= \min_{k} |z_k^{(n)}| \text{ and if $n<\infty$,  }x_n^{(n)}= \max_{k} |z_k^{(n)}|.  \]
\begin{enumerate}

\item \label{point 1} The random variable $x_n^{(n)}$ has the same distribution as $1/x_1^{(n)}$.

\item There exists three constants $C_1>0$ , $r>0$ and $A>0$ depending only on the distribution of $|a_0|$ such that
\begin{equation}\label{keyinequality}
\forall \  0<t<C_1, \quad \PP\left(x_1^{(n)} \leq t \right) \geq \PP\left(|a_0|\leq \frac{rt}{2} \right) A.
\end{equation}

\item If there exists $k \geq 0$, $a>0$ and $\delta>0$ such that 
\[\forall \  t<\delta ,\quad \PP(|a_0| \leq t) \geq at^{k}  \quad \text{then} \quad  \EE((x_n^{(n)})^k)= \infty.\]

\item Almost surely, the point process $\chi_n=\left\{ z_k^{(n)}\mid |z_k^{(n)}|<1 \right\}$ converges weakly in the space of Radon measures towards $\chi_{\infty}= \left\{ z_k^{(\infty)} \right\}$. More precisely, for any continuous and compactly supported function $f$ defined on the open unit disk $D(0,1)$ we have
\[ \sum_k f(z_k^{(n)}) \xrightarrow[n \to \infty]{a.s} \sum_k f(z_k^{(\infty)}).  \]

\item The random variable $x_1^{(n)}$ converges almost surely towards $x_1^{(\infty)}$ and $x_n^{(n)}$ converges in distribution towards $x^{(\infty)}:=1/x_1^{(\infty)}$.
\end{enumerate}
\end{theorem}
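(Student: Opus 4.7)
My plan is to treat the five items in the order they are stated; given the Rouché-type inequality of (2), the remaining assertions follow from fairly standard complex-analytic facts.

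For (1), the reciprocal polynomial $\widetilde P_n(z)=z^n P_n(1/z)=\sum_{k=0}^n a_{n-k}z^k$ has zeros $1/z_k^{(n)}$ (no $z_k^{(n)}$ is $0$ since $\PP(a_0=0)=0$), so the maximum modulus of its zeros equals $1/x_1^{(n)}$; the i.i.d.\ invariance $\widetilde P_n\stackrel{d}{=} P_n$ then yields the distributional identity. For (2), split $P_n(z)=a_0+zQ(z)$ with $Q(z)=\sum_{k=1}^n a_kz^{k-1}$ and apply Rouché on $\{|z|=t\}$. Since $|a_0|$ is non-deterministic and $\PP(a_0=0)=0$, we may choose $r>0$ with $\PP(|a_1|\geq 2r)>0$. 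The log-moment assumption forces $\sum_{k\geq 2}|a_k|t^{k-1}\to 0$ almost surely as $t\to 0$, so there is $C_1>0$ such that for every $t\in(0,C_1)$, uniformly in $n$ (finite partial sums being dominated by the infinite one), the event $E_t=\{|a_1|\geq 2r,\ \sum_{k\geq 2}|a_k|t^{k-1}\leq r\}$ has probability at least $A:=\PP(|a_1|\geq 2r)/2$. On $E_t$, the reverse triangle inequality gives $|Q(z)|\geq r$ on $|z|\leq t$; combined with the independent event $\{|a_0|\leq rt/2\}$, this yields $|a_0|<|zQ(z)|$ on $|z|=t$, and Rouché produces a zero of $P_n$ in $D(0,t)$, establishing \eqref{keyinequality}.

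For (3), item (1) gives $\EE((x_n^{(n)})^k)=\int_0^\infty\PP(x_1^{(n)}\leq u^{-1/k})\,du$, and substituting the hypothesis $\PP(|a_0|\leq t)\geq at^k$ into \eqref{keyinequality} makes the integrand bounded below by a constant multiple of $1/u$ for large $u$, forcing divergence. For (4), the log-moment assumption yields $P_n\to P_\infty$ almost surely uniformly on compact subsets of $D(0,1)$. By Hurwitz' theorem applied on compact neighborhoods of the isolated zeros of the nonconstant holomorphic function $P_\infty$, the zero set of $P_n$ inside any compact $K\subset D(0,1)$ converges with correct multiplicity to that of $P_\infty$, and continuity plus compact support of $f$ yield $\sum_k f(z_k^{(n)})\to\sum_k f(z_k^{(\infty)})$. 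For (5), the same Hurwitz argument gives $x_1^{(n)}\to x_1^{(\infty)}$ almost surely (no zero of $P_n$ can persist below $R'<x_1^{(\infty)}$, and a zero of $P_n$ must appear near $x_1^{(\infty)}$ once $R>x_1^{(\infty)}$ satisfies $R<1$), and then (1) together with the continuous mapping $x\mapsto 1/x$ transfers this to convergence in distribution for $x_n^{(n)}$.

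The main technical obstacle, hidden in (5), is to show that $x_1^{(\infty)}<1$ almost surely, i.e.\ that $P_\infty$ has at least one zero in $D(0,1)$ with probability one. Item (2) applied directly to the infinite series $P_\infty$ gives $\PP(x_1^{(\infty)}\leq t)\geq A\,\PP(|a_0|\leq rt/2)>0$ for every $t\in(0,C_1)$, which is only a positive-probability statement; promoting it to probability one requires either a zero-one law on a suitably invariant event (for instance via the ergodicity of shifts on the coefficient sequence), or a finer quantitative argument such as a lower bound on the expected number of zeros of $P_\infty$ in $D(0,R)$ obtained from Jensen's formula applied to $\log|P_\infty|$.
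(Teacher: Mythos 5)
Your overall architecture matches the paper's: (1) via the reversed polynomial $z^nP_n(1/z)$, (2) via Rouch\'e, (3) via the layer-cake formula combined with (1) and (2), (4) via Hurwitz, (5) from (4) and (1). The one genuinely different step is (2): you run Rouch\'e on the deterministic circle $\{|z|=t\}$, comparing $P_n$ with $zQ(z)$ on the event $E_t=\{|a_1|\geq 2r,\ \sum_{k\geq2}|a_k|t^{k-1}\leq r\}$, whereas the paper compares $P_n$ with $P_1(z)=a_0+a_1z$ on the random circle $\{|z|=2|a_0|/|a_1|\}$ and controls the tail through $M=\sup_{k\geq2}|a_k|e^{-k}$. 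Both arguments rest on the same lemma (a.s.\ finiteness of $\sup_k|a_k|e^{-\varepsilon k}$, which is where $\EE\log(1+|a_0|)<\infty$ enters for $n=\infty$), both give constants uniform in $n$ because the finite tails are dominated by the infinite one, and both exploit the same independence factorization $\PP(E_t\cap\{|a_0|\leq rt/2\})=\PP(E_t)\,\PP(|a_0|\leq rt/2)$. Your version avoids the paper's algebra with $4e^2|a_0|^2/|a_1|^2/(1-2e|a_0|/|a_1|)$ and is, if anything, cleaner; it is correct as written.

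On the obstacle you flag in (5): you are right that the argument needs $P_\infty$ to have at least one zero in $D(0,1)$ almost surely for $x_1^{(\infty)}$ to be a well-defined finite random variable, and you should know that the paper does not address this either --- it deduces (5) from (4) in a single sentence. The positive-probability bound from (2) indeed does not suffice, and it can even be vacuous: if $|a_0|$ is supported in $[1,2]$, say, then $\PP(|a_0|\leq rt/2)=0$ for small $t$, yet (5) is still asserted. Neither of your two suggested repairs is immediate: the event ``$P_\infty$ has a zero in $D(0,1)$'' is not invariant under the coefficient shift (nor exchangeable), so no 0--1 law applies off the shelf; and the Jensen route requires showing that $\frac{1}{2\pi}\int_0^{2\pi}\log|P_\infty(re^{i\theta})|\,d\theta\to\infty$ as $r\to1$, which fails for the excluded deterministic case $\sum_k z^k=1/(1-z)$ (whose logarithmic means are identically $0$ and which is zero-free), so any proof must use the non-degeneracy of the coefficients in an essential way. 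In short: this is a real gap, but it is one you share with, and correctly diagnosed in, the source rather than a step of the paper's proof that you failed to reproduce; everything else in your write-up is sound.
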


\noindent The condition $\PP(a_0=0)=0$ ensures that the degree of $P_n$ is $n$. Hence the zero set $\{ z_k^{(n)}\}$ is well defined. If we remove this condition, the theorem is still true if we take the convention that for a polynomial of degree $k<n$, $z_{k+1}^{(n)} = \dots = z_{n}^{(n)} =  \infty$.

In the case of real Gaussian coefficients, Majumdar and Schehr proved a similar result to (3) for the largest real root in \cite{schehrmajumdar2008}. They use a Taylor expansion of the first intensity function of the real zeros at $0$ to prove that the density of $x_n^{(n)}$ decays at infinity like $1/t^2$.

Notice that the first three points of Theorem \ref{heavytail} are valid for any fixed $n$. The heavy tail behavior of the largest root is not asymptotic. The points (4) and (5) imply that the heavy tail phenomena do not vanish at infinity, namely that $x^{(\infty)}$ does not have more finite moments that $x_n^{(\infty)}$. The point (4) is a deterministic statement, and is a direct application of Hurwitz's theorem in complex analysis. It will allow us obtain Corollary \ref{gaussiancase} which gives the limit of the point process $\chi_n$ for complex Gaussian coefficients.

What does this theorem say for some classical distribution of the coefficients?
If the distribution of $|a_0|$ is absolutely continuous with respect to the Lebesgue measure on $\RR_+$ with density $g$, then the point (3) can be linked to the density $g$ at zero:

\begin{enumerate}
\item if $g$ is continuous at $0$ and $g(0)>0$ then the largest root in modulus has infinite mean;

\item if $g(t) \sim \alpha t$ at zero, then $x_n^{(n)}$ has an infinite second moment.
\end{enumerate}

\noindent These two situations cover most of the classical examples of random variables. Real Gaussian random variables, exponential random variables, Cauchy random variables and many others are covered by the first part of the remark. Radial complex random variables such as complex Gaussian random variables are covered by the second point. This is a consequence of the polar change of coordinates which adds a factor $2\pi r$ to the density. This phenomenon is illustrated in Figure \ref{figure1}: when the coefficients are complex, the density of $x_1^{(n)}$ vanishes at zero.

If the distribution of the $a_k$'s is supported in an annulus bounded away from zero, then all the roots lie in an annulus. This can be seen as a consequence of the Gershgorin circle theorem of localization of the eigenvalues of matrices, applied to the companion matrix of the polynomial $P_n$. In this setting, we also know the empirical measure of $P_n$ converges deterministically towards the uniform measure on the unit circle \cite{hughes}.

This theorem can be surprising if we compare it to similar results in random matrix theory. There is a strong analogy of results and techniques between random polynomials and random matrices. For Ginibre random matrices or Kac random polynomials, the empirical measures converge towards a deterministic measure and the explicit distribution of the eigenvalues (or zeros) can be computed when the coefficients are Gaussian (real or complex). Large deviation principles for the empirical measures were obtained with the same speed and very similar rate functions (\cite{hiaipetz}, \cite{benarouszeitouni} for Ginibre, \cite{zeitounizelditch} and \cite{butez} for Kac polynomials). For random matrices, the large deviation principle for the empirical measure is known not to be universal \cite{bordenavecaputo} and to depend on the tail of the coefficients of the matrix. For random polynomials, the large deviation principle is universal\cite{butezzeitouni}.
For random matrices, the fluctuations of the largest eigenvalue at the edge of the limiting distribution have been studied by many authors since \cite{tracywidom} and have proven to be universal.

When the $a_k$'s are i.i.d. standard complex Gaussian random variables, the zeros of $P_n$ form a Coulomb gas in $\CC^n$ (see \cite{zeitounizelditch}) with density of the form 
\[ (z_1^{(n)},\dots,z_n^{(n)}) \sim \frac{1}{Z_n} \exp \left( \sum_{i \neq j} \log |z_i^{(n)}-z_j^{(n)}| -(n+1) \log \frac{1}{2\pi}\int_0^{2\pi} \prod_{k=1}^{n}|e^{i\theta}-z_k^{(n)}|^2 d\theta \right). \]

\noindent This is similar to the eigenvalues of complex Ginibre random matrices where the density of the eigenvalues on $\CC^n$ is of the form (see \cite{ginibre})
\[(\lambda_1 , \dots, \lambda_n)\sim \frac{1}{Z_n'} \exp \left( \sum_{i\neq j} \log |\lambda_i-\lambda_j| - n \sum_{k=1}^n \frac{|\lambda_i|^2}{2}  \right). \]

They have in common the same interaction between the particles, but the confining term is different. Why does the largest particle have such a different behavior for polynomials and matrices? If we look at the behavior of the confining term in each variable for random polynomials, we see that it grows at infinity like $\log(|z|)$ while the confining term for Ginibre is $V(z)=|z|^2/2$. We believe that it is a general fact for Coulomb gases: when the confining term is of order $\log|z|$ at infinity, the largest particle has a heavy tail and when the confining term is stronger than logarithm, the largest particle should converge towards the edge of the limiting distribution.  The potential energy of the largest particle is approximatively $\int -\log|z-w|d\mu_{\infty}(w) + V(z)$, where $\mu_{\infty}$ is the limiting distribution of the particles. Hence, if $V$ grows faster than logarithm, the cost of having a particle far from the support of $\mu_{\infty}$ grows at infinity. On the other hand, if $V$ and $\int -\log|z-w|d\mu_{\infty}(w)$ are of same order, the cost of having a particle far from the support of $\mu_{\infty}$ is finite. To our knowledge, this phenomenon is not treated in the literature. The heuristic above only relies on ``energetic'' considerations and has not been proved so far.
The same phenomenon should appear for real Gaussian coefficients, as the distribution of the zeros of $P_n$ \cite{butez} is also very similar to the distribution of the eigenvalues of the real Ginibre ensemble \cite{edelman1997proba}: both form a mixture of Coulomb gases, each gas having a fixed number of particles on the real line.

If we compare the hypotheses of Theorem \ref{heavytail} with the one of the main theorem from \cite{butezzeitouni}, we see that both rely on the behavior of the distribution of the coefficients at zero. Within the universality class of random Kac polynomials with Gaussian coefficients, the number of finite moments for the largest root is constant.

\begin{figure}[h]
	\includegraphics[scale=0.54]{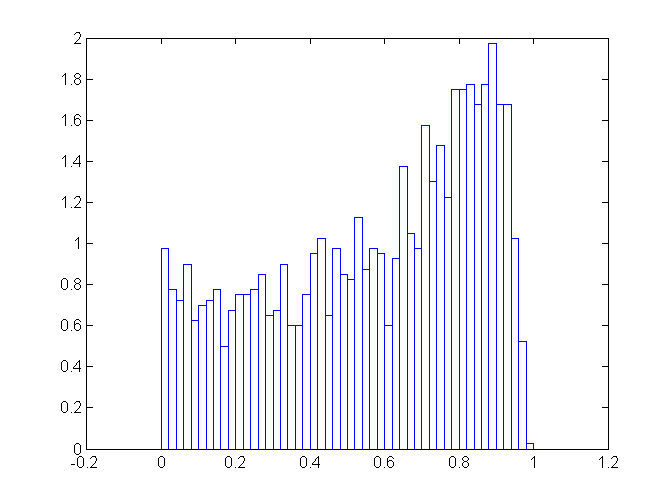}\hfill
	\includegraphics[scale=0.54]{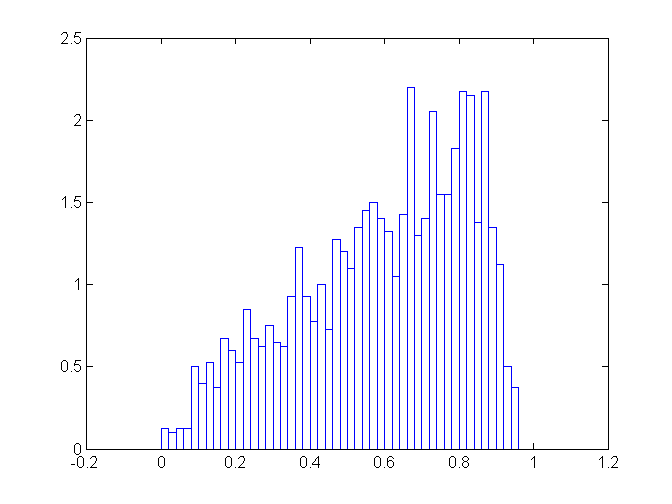}
	\caption{Histogram of the $x_1^{(500)}$ for exponential coefficients with mean $1$ (left) and $d\mu=1/2\pi e^{-|z| }dz$ on $\CC$ (right).}
	\label{figure1}
\end{figure}

\subsection*{Complex Gaussian coefficients.}
Theorem \ref{heavytail} states that $x_1^{(n)}$ and $x_n^{(n)}$ converge in distribution but do not give any precise information on the limit. In the case of complex Gaussian coefficient, we can give more precise results on this limiting distribution as the zeros of $P_{\infty}$ have been studied in the case of Gaussian Analytic Functions (GAF) by Peres and Vir\'ag in \cite{peresvirag}
\[P_{\infty}(z) = \sum_{k \in \NN}a_k z^k .\]
The corollary below is just a combination of Theorem \ref{heavytail} along with \cite[Theorem 5.1.1 and Corollary 5.1.7]{houghtkris}.
\begin{corollary}[Gaussian case]\label{gaussiancase}
Let $(a_k)_{k \in \NN}$ be a sequence i.i.d. standard complex Gaussian random variables.
\begin{enumerate}

\item The point process $\left\{ z_k^{(n)} \mid |z_k^{(n)}|<1  \right\}$ converges towards the determinantal point process in the open unit disk $D(0,1)$ with the Bergman Kernel
\begin{equation*}
\forall z , w \in D(0,1) ,\quad K(z,w)= \frac{1}{\pi(1-z\bar{w})^2}.
\end{equation*}
As a consequence, the set $\{|z_k^{(n)}|\}_{k \geq 1}$ has the same law as the set $\{U_k^{1/2k}\}_{k \geq 1}$ where $(U_k)_{k \geq 1}$ is a sequence of i.i.d. uniform random variables on $[0,1]$.

\item The smallest root in modulus of $P_{\infty}$, $z_1$, has a rotationally invariant distribution and its modulus, $x_1^{(\infty)}$, has a cumulative distribution function defined on $(0,1)$ given by:
\begin{equation*}
F_{x_1^{(\infty)}}(t)= 1 - \prod_{k=1}^{\infty}(1-t^{2k}).
\end{equation*}
\end{enumerate}

\end{corollary}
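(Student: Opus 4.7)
The plan is to combine point~(4) of Theorem~\ref{heavytail}, which identifies $\chi_\infty$ as the zero set of a random analytic function, with the classical description of that zero set due to Peres and Vir\'ag \cite{peresvirag}. First, I check that the hypotheses of Theorem~\ref{heavytail} are met for a standard complex Gaussian $a_0$: one has $\EE(\log(1+|a_0|))<\infty$, the modulus $|a_0|$ is non-deterministic, and $\PP(a_0=0)=0$. Thus point~(4) gives almost sure vague convergence of $\chi_n$ towards the zero set $\chi_\infty$ of $P_\infty(z)=\sum_k a_k z^k$ inside $D(0,1)$. This $P_\infty$ is precisely the hyperbolic Gaussian Analytic Function with parameter $L=1$ in the terminology of \cite{houghtkris}.

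Next, I invoke \cite[Theorem~5.1.1]{houghtkris}, which identifies the law of $\chi_\infty$ as the determinantal point process on $D(0,1)$, with respect to Lebesgue measure, with Bergman kernel $K(z,w)=1/(\pi(1-z\bar w)^2)$. Combined with the convergence of $\chi_n$, this yields part~(1) of the corollary. The distributional identity between the unordered set of moduli $\{|z_k^{(\infty)}|\}_{k\geq 1}$ and $\{U_k^{1/(2k)}\}_{k\geq 1}$ is the content of \cite[Corollary~5.1.7]{houghtkris}, which exploits the fact that the radial marginals of a determinantal process on $\CC$ with a rotationally invariant kernel decouple as independent Bernoulli-type random variables.

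For part~(2), the rotational invariance of $z_1$ follows from the observation that $(e^{ik\theta}a_k)_k$ and $(a_k)_k$ have the same distribution for every $\theta\in\RR$, so that $z\mapsto P_\infty(e^{i\theta}z)$ has the same law as $P_\infty$ as a random analytic function and hence the whole zero set, and in particular its minimum-modulus element, is rotationally invariant. For the cumulative distribution function, I use the identity in law from part~(1) to write, for $t\in(0,1)$,
\[
\PP\!\left(x_1^{(\infty)}>t\right)=\PP\!\left(\min_{k\geq 1} U_k^{1/(2k)}>t\right)=\prod_{k=1}^{\infty}\PP(U_k>t^{2k})=\prod_{k=1}^{\infty}(1-t^{2k}),
\]
using the independence of the $U_k$; taking the complement gives the stated formula. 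Since each step is either a direct citation to \cite{houghtkris} or a one-line elementary computation, there is no genuine obstacle beyond the Peres--Vir\'ag identification itself, which is quoted rather than reproved.
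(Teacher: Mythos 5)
Your proposal is correct and follows exactly the route the paper takes: the paper itself only remarks that Corollary \ref{gaussiancase} is the combination of point (4) of Theorem \ref{heavytail} with \cite[Theorem 5.1.1 and Corollary 5.1.7]{houghtkris}, and you supply precisely that combination, plus the easy product computation for $F_{x_1^{(\infty)}}$ and the rotational-invariance observation that the paper leaves implicit.
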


\noindent
This theorem is no more than the combination of the work of Peres and Vir\'ag \cite{peresvirag} with Theorem \ref{heavytail}. This allows us to compute the exact limit distribution of the smallest modulus of the zeros of $P_n$. Notice that the cumulative distribution function $F_{x_1^{(\infty)}}$ is of order $t^2$ around zeros, which implies that $1/x_1^{(\infty)}$ has an infinite variance.

Figure \ref{figure2} is an illustration of the point (2) of Corollary \ref{gaussiancase}. The histogram of $x_1^{(500)}$ is very similar to the graph of the density of $x_1^{\infty}$.

\begin{figure}[!h]

\includegraphics[scale=0.53]{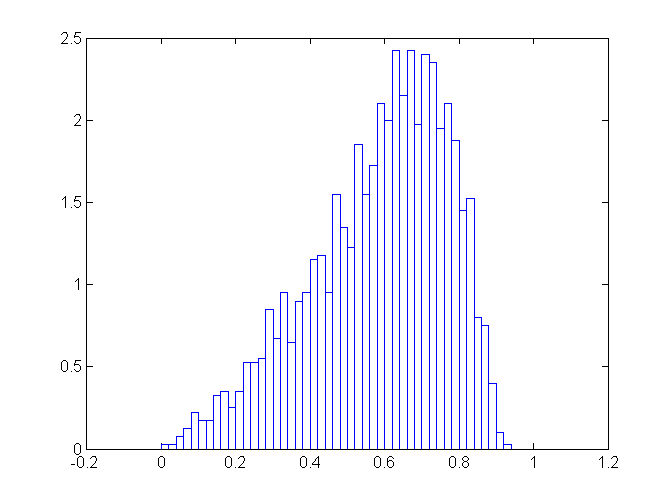}\hfill
\includegraphics[scale=0.54]{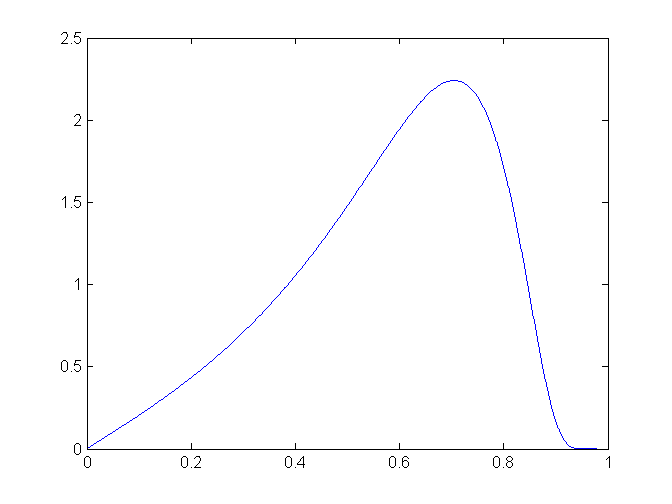}
\caption{Histogram of $x_1^{(500)}$ for complex Gaussian coefficients and density of $x_1^{(\infty)}$.  }
\label{figure2}
\end{figure}

\section{Proofs of the results}
We start with a lemma that will be essential in the proof of Theorem \ref{heavytail}.
\begin{lemma}[Lemma 4.1 in \cite{kabluchkozapo}] \label{lemma}
	Let $(a_k)_{k\in \NN}$ be i.i.d. random variables. Fix $\varepsilon > 0$. Then 
	\[\sup\limits_{k\in \NN} \frac{|a_k|}{e^{\varepsilon k}} < \infty    \text{  a.s.}   \Leftrightarrow \EE( \log (1+a_0)) < \infty.  \]
\end{lemma}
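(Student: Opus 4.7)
The plan is to reduce the equivalence to a standard application of the Borel--Cantelli lemmas, with the bridge being the convergence of the auxiliary series
\[ S_\varepsilon := \sum_{k=0}^\infty \PP\bigl(|a_0|>e^{\varepsilon k}\bigr). \]
First I would show that $S_\varepsilon<\infty$ is equivalent to $\EE(\log(1+|a_0|))<\infty$; then Borel--Cantelli applied to the events $A_k=\{|a_k|>e^{\varepsilon k}\}$ translates this into the a.s.\ finiteness or infiniteness of $\sup_k|a_k|/e^{\varepsilon k}$.

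For the first step, set $Y=\log(1+|a_0|)$. The layer-cake identity gives $\EE(Y)=\int_0^\infty \PP(Y>t)\,dt$, and since $t\mapsto\PP(Y>t)$ is non-increasing, this integral is finite iff $\sum_{k\geq 0}\PP(Y>\varepsilon k)<\infty$. Writing $\{Y>\varepsilon k\}=\{|a_0|>e^{\varepsilon k}-1\}$ and using the comparison $e^{\varepsilon k}-1\geq e^{\varepsilon(k-1)}$ for $k\geq 1$, the series $\sum_k\PP(|a_0|>e^{\varepsilon k}-1)$ and $S_\varepsilon$ differ only by an index shift and therefore converge together. This yields $S_\varepsilon<\infty \iff \EE(\log(1+|a_0|))<\infty$.

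For the second step, the i.i.d.\ assumption makes the events $A_k$ independent with $\PP(A_k)=\PP(|a_0|>e^{\varepsilon k})$. If $S_\varepsilon<\infty$, the first Borel--Cantelli lemma gives $\PP(A_k \text{ i.o.})=0$, so almost surely only finitely many $k$ satisfy $|a_k|>e^{\varepsilon k}$ and hence $\sup_k|a_k|/e^{\varepsilon k}<\infty$ a.s. If $S_\varepsilon=\infty$, the second Borel--Cantelli lemma yields $\PP(A_k \text{ i.o.})=1$; applying the same reasoning with $|a_0|$ replaced by $|a_0|/M$ (which still has infinite logarithmic moment since $\log(1+|a_0|)\leq \log(1+M)+\log(1+|a_0|/M)$) shows that $|a_k|>Me^{\varepsilon k}$ occurs infinitely often a.s.\ for every $M>0$, so $\sup_k|a_k|/e^{\varepsilon k}=\infty$ a.s. There is no real obstacle in this argument; the only care lies in the monotonicity-and-shift comparison in step one and in the rescaling used to upgrade $\limsup\geq 1$ to $\limsup=\infty$ in step two.
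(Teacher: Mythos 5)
Your proof is correct and, for the forward implication ($\EE(\log(1+|a_0|))<\infty \Rightarrow \sup_k |a_k|/e^{\varepsilon k}<\infty$ a.s.), it follows exactly the paper's route: the layer-cake identity to convert the logarithmic moment into summability of $\PP(|a_0|>e^{\varepsilon k})$, then the first Borel--Cantelli lemma. Where you go further is the converse: the paper only sketches it as ``similar reasoning'' and never uses it, whereas you give a complete argument, and your rescaling trick (replacing $|a_0|$ by $|a_0|/M$ and using $\log(1+|a_0|)\leq\log(1+M)+\log(1+|a_0|/M)$ to preserve the infinite logarithmic moment) is exactly what is needed to upgrade ``$A_k$ infinitely often'' to $\sup_k|a_k|/e^{\varepsilon k}=\infty$ a.s.; note that second Borel--Cantelli alone only yields $\sup\geq 1$, which does not contradict a.s.\ finiteness, so this extra step is genuinely necessary (a Kolmogorov zero--one law argument would be an alternative). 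One small slip: the inequality $e^{\varepsilon k}-1\geq e^{\varepsilon(k-1)}$ does not hold for all $k\geq 1$ when $\varepsilon$ is small (it requires $e^{\varepsilon k}\geq (1-e^{-\varepsilon})^{-1}$), but it does hold for all sufficiently large $k$, and since convergence of a series depends only on its tail, your comparison between $\sum_k\PP(|a_0|>e^{\varepsilon k}-1)$ and $S_\varepsilon$ goes through unchanged; just replace ``for $k\geq 1$'' by ``for $k$ large enough.''
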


\begin{proof}[Proof of the lemma]
	For every non negative random variable $X$ we have:
	\[ \sum\limits_{k=1}^{\infty} \PP(X\geq k) \leq \EE(X) \leq \sum\limits_{k=0}^{\infty} \PP(X\geq k) . \]
	Those inequalities come from the relation: $\EE(X)= \int_{\RR^+} \PP(X\geq x) dx$.
	
\noindent Now we use this inequality to the non negative variable \[X= \frac{1}{\varepsilon} \log (1+|a_0|).\] We deduce that \[\sum\limits_{k=1}^{\infty} \PP(\frac{|a_k|}{e^{\varepsilon k}} > 1) < \infty\] and so we have, thanks to the Borel-Cantelli lemma \[\limsup \frac{|a_k|}{e^{\varepsilon k}} \leq 1,\] which implies $\sup\limits_{k\in \NN} \frac{|a_k|}{e^{\varepsilon k}} < \infty$.
	
	The reverse implication relies on a similar reasoning, and will not be used in the proof of the theorem.
\end{proof}
\begin{proof}[Proof of Theorem \ref{heavytail}] \textbf{Proof of 1.}
Let $P_n(z)= a_0+ a_1 z+\dots+a_nz^n$ and \[Q_n(z)=z^n P_n(1/z)= a_n = a_{n-1}z + \dots = a_0z^n.\] As the $a_k$ are i.i.d. random variables, the distribution of the random polynomials $P_n$ and $Q_n$ are the same. If $\{z_k^{(n)}\}$ is the set of zeros of $P_n$ then the set of zeros of $Q_n$ is $\{1/z_k^{(n)}\}$. This implies that $x_n^{(n)}$ and $1/x_1^{(n)}$ have the same distribution.

\paragraph{Proof of 2.} Fix $n \in \NN \cup \{\infty\}$. The random variable $M:=\sup_{k \geq 2} |a_k|/ e^k$ is almost surely finite. This is obvious for $n<\infty$ and this is a consequence of lemma \ref{lemma} for $n=\infty$. There exists $K$ such that $\PP(M<K)>0$. Let us define $C_2 = \PP(M<K)>0$.
The key idea of this proof is to use Rouché's theorem \cite[p.~181]{lang} to show that $P_n$ and $P_1(z)=a_0+a_1z$ have the same number of roots in a neighborhood of the origin. Rouché's theorem is the following: if $\gamma$ is a closed path holomogous to $0$ in some open set $\mathcal{U}$ such that $\gamma$ has an interior and $f$ and $g$ are two analytic functions on $\mathcal{U}$ such that for any $z \in \gamma$
\[ |f(z)-g(z)|<|f(z)| \]
then $f$ and $g$ have the same number of zeros in the interior of $\gamma$.

\noindent To bound from below the probability that $x_1^{(n)}$ is smaller than $t$, we compare it to the modulus of the root of $P_1$.
\begin{align*}
\PP(x_1^{(n)} \leq t) &\geq \PP\left(x_1^{(n)}   \leq 2\frac{|a_0|}{|a_1|} \text{ and } 2\frac{|a_0|}{|a_1|} \leq t\right) \\
& \geq \PP\left(P_n \text{ has exactly one zero in } B\left(0,2\frac{|a_0|}{|a_1|} \right) \text{ and } 2\frac{|a_0|}{|a_1|} \leq t \right) \\
&  \geq \PP\left( \text{$P_n$ and $P_1$ have the same number of zeros in }B\left(0,\frac{2|a_0|}{|a_1|} \right) \text{ and } \frac{2|a_0|}{|a_1|} \leq t \right) \\
& \geq \PP\left( \sup_{|z| = 2|a_0|/|a_1|} |P_n(z)-P_1(z)| < \inf_{|z| = 2|a_0|/|a_1|} |P_1(z)| \text{ and } 2\frac{|a_0|}{|a_1|} \leq t \right).
\end{align*}
We notice that the triangle inequality implies that
\[\inf_{|z| = 2|a_0|/|a_1|} |P_1(z)| = \inf_{|z| = 2|a_0|/|a_1|} |a_0 + a_1z| \geq \inf_{|z| = 2|a_0|/|a_1|} |a_1||z|-|a_0| = |a_0|\]
and that 
\begin{align*}
\sup_{|z| = 2|a_0|/|a_1|} |P_n(z)-P_1(z)|  \leq \sum_{k \geq 2} |a_k| \left(\frac{2|a_0|}{a_1}\right)^k 
\leq M \sum_{k\geq 2} e^k  \left(\frac{2|a_0|}{a_1}\right)^k \leq M \frac{4e^2 |a_0|^2/|a_1|^2}{1- 2e|a_0|/|a_1|}.
\end{align*}
Let $r$ be a constant such that $C_3=\PP(|a_1|>r)>0$, then we obtain
\begin{align*}
\PP(x_1^{(n)} \leq t) & \geq \PP\left( M \frac{4e^2 |a_0|^2/|a_1|^2}{1- 2e|a_0|/|a_1|} <  |a_0| \text{ and } 2\frac{|a_0|}{|a_1|} \leq t \right) \\
& \geq \PP\left( K \frac{4e^2 |a_0|^2/|a_1|^2}{1- 2e|a_0|/|a_1|} <  \frac{r|a_0|}{|a_1|}\text{ and } |a_1|>r \text{ and } 2\frac{|a_0|}{|a_1|} \leq t \text{ and } M<K \right)\\
\end{align*}
As $t<1/(A+1)$ then $A \frac{t^2}{1-t} < t$, we obtain that the event
\[\left\{2\frac{|a_0|}{|a_1|} \leq t \text{ and } |a_1|>r \right\} \text{ contains the event } \left\{K \frac{4e^2 |a_0|^2/|a_1|^2}{1- 2e|a_0|/|a_1|} <  r\frac{|a_0|}{|a_1|}\right\} \text{ if }t \leq \frac{e^{-1}}{K/r+1}.\] 

\noindent For $t \leq \frac{e^{-1}}{K/r+1}=C_1$, we get, using the independence of the $a_k$'s
\begin{align*}
\PP(x_1^{(n)} \leq t) & \geq \PP\left(2\frac{|a_0|}{|a_1|} \leq t \text{ and } |a_1|>r \text{ and } M<K \right) \\
& \geq \PP \left( |a_0|< \frac{rt}{2}\right) \PP\left(|a_1|>r\right) \PP\left(M<K\right)  \\
& \geq C_2C_3\PP \left( |a_0|< \frac{rt}{2}\right).
\end{align*}
\paragraph{Proof of 3.} Assume that there exists $k \geq 0$, $a>0$ and $\delta>0$ such that 
\[\forall \  t<\delta ,\quad \PP(|a_0| < t) \geq at^{k}.\]
Let $X$ be a non-negative random variable. Then the Fubini theorem implies that 
\[ \frac{1}{k+1}\EE(X^{k+1})= \int_0^{\infty} t^k \PP(X\geq t)dt.  \]
Using this along with the point $(1)$ we get
\begin{align*}
\frac{1}{k}\EE((x_n^{(n)})^{k}) & = \int_{0}^{\infty} t^{k-1} \PP(x_n^{(n)} \geq t) dt \\
& = \int_{0}^{\infty} t^{k-1} \PP(x_1^{(n)} \leq 1/t) dt \\
& \geq A\int_{1/C_1}^{\infty} t^{k-1} \PP(|a_0| \leq \frac{2}{rt}) dt \\
& \geq A  \frac{2^k}{r^k} \int_{1/C_1}^{\infty} a  t^{k-1} t^{-k} dt.
\end{align*}
This implies that
\[ \EE((x_n^{(n)})^{k}) = \infty. \]

\paragraph{Proof of 4.} Thanks to Lemma \ref{lemma}, we obtain that the radius of convergence of the random entire function $P_{\infty}$ is almost surely $1$. This implies that, almost surely, $P_n$ converges uniformly towards $P_{\infty}$ on any closed disk $\overline{D}(0,\rho)$ with radius $\rho<1$. In this setting, the almost sure convergence of the zeros of $P_n$ inside $\overline{D}(0,\rho)$ is exactly Hurwitz's theorem \cite[p.~152]{conway} in complex analysis. Hurwitz's theorem is a consequence of Rouché's theorem, which is a consequence of the argument principle. We give a proof of the convergence of the point processes using directly Rouché's theorem.

\noindent
Let $(\Omega, \mathcal{F},\PP)$ be a probability space on which the $a_k$'s are defined. Let $\mathcal{N}$ be a negligible set such that for any $\omega \in \Omega \setminus \mathcal{N}$, $P_{\infty}$ is a non-constant entire series with radius of convergence one.

Fix $\omega \in \Omega \setminus \mathcal{N}$ and let $z^{(\infty)}$ be a zero of $P_{\infty}$, with multiplicity $\beta$. As the zeros of $P_{\infty}$ are isolated, for any $\varepsilon$ small enough, $P_{\infty}$ has no other zero than $z^{(\infty)}$ in the closed disk $\overline{D}(z^{(\infty)},\varepsilon)$. Thanks to Rouché's theorem, we know that if
\begin{equation}\label{rouché}
\sup_{|z-z^{(\infty)}|=\varepsilon} |P_n(z)-P_{\infty}(z)| < \inf_{|z-z^{(\infty)}|=\varepsilon} |P_{\infty}(z)|
\end{equation}
then $P_n$ and $P_{\infty}$ have the same number of zeros inside $\overline{D}(z^{(\infty)},\varepsilon)$. The inequality \eqref{rouché} is automatically satisfied for $n$ large enough, as we fixed $\varepsilon$ such that $P_{\infty}$ does not have a zero on the boundary of the disk $D(z^{(\infty)}, \varepsilon)$. 

\noindent Here we proved that for any zero of multiplicity $\beta$ of $P_{\infty}$, for any $\varepsilon >0$ sufficiently small, one can find $\beta$ zeros of $P_n$ at a distance at most $\varepsilon$ of $z^{(\infty)}$. This implies that any fixed finite number of zeros of $P_n$ converges almost surely towards zeros of $P_{\infty}$.

\paragraph{Proof of 5.} The fact that $x_1^{(n)}$ converges almost surely toward $x_1^{(\infty)}$ is a consequence of the point 4). As we know that $x_n^{(\infty)}$ has the same distribution as $1/x_1^{(n)}$, then we obtain the converge in distribution of $x_n^{(\infty)}$ towards $1/x_1^{(\infty)}$ for free.
\end{proof}

\section{Comments.}
Notice that the proof of points $(1)$, $(2)$ and $(3)$ of Theorem \ref{heavytail} for finite $n$ does not use the assumption $\EE(\log(1+|a_0|))<\infty$. These three points are always valid. This assumption is only needed to make sure that, almost surely, the $a_k$'s do not grow faster than $e^k$.

\noindent
An alternative proof of Theorem \ref{heavytail} uses Jensen's formula \cite[p.~341]{lang} for analytic functions. We chose to use an approach based on Rouché's Theorem as it also implies the convergence of the point process of the small roots.

\bibliographystyle{alpha} 
\bibliography{biblio} 
\end{document}